\newtheorem{theorem}{Theorem}[section]
\newtheorem{corollary}[theorem]{Corollary}
\let\olddefinition\definition
\renewcommand{\definition}{\olddefinition\normalfont}
\let\oldremark\remark
\renewcommand{\remark}{\oldremark\normalfont}
\let\oldexample\example
\renewcommand{\example}{\oldexample\normalfont}
\numberwithin{equation}{section}
\def\tr{\ensuremath {\textnormal{tr}}}
\def\Q{\ensuremath {\mathbb Q}}
\def\C{\ensuremath {\mathbb C}}
\def\H{\ensuremath {\mathcal H}}
\def\R{\ensuremath {\mathbb R}}
\def\A{\ensuremath {\mathbb A}}
\def\a{\ensuremath {\mathfrak a}}
\def\L{\ensuremath {\mathcal L}}
\def\P{\ensuremath {\mathcal P}}
\def\Re{\ensuremath {\textnormal{Re}}}
\def\bs{\ensuremath {\backslash}}
\def\GL{\ensuremath {\mathrm{GL}}}
\title{Zeroes of Rankin--Selberg $L$-functions and the trace formula}
\author{Tian An Wong}
\subjclass[2010]{11M36 \and 11M26 \and 11F72}
\keywords{Explicit formula, Arthur--Selberg trace formula, intertwining operator, Rankin--Selberg $L$-functions, base change}
\begin{document}

\begin{abstract}
We express the contribution of certain maximal parabolic Eisenstein series to the spectral side of the Arthur--Selberg trace formula for GL$(n)$ in terms of zeroes of Rankin--Selberg $L$-functions, generalizing previous results for GL(2) and Hecke $L$-functions. As applications, we prove a lower bound for the sum of these zeroes, and a base change relation between the zeroes in the case $n=2$ for cyclic extensions of prime degree.
 \end{abstract}

\maketitle

\section{Introduction}

In \cite{W}, we introduced a new method of studying the zeroes of $L$-functions on GL$_2$ by applying the explicit formulas to the spectral side of the trace formula. In this note, we extend this method to Rankin--Selberg $L$-functions on $G=\GL_n$ by considering the noninvariant Arthur--Selberg trace formula and the Langlands--Shahidi method for maximal parabolic Eisenstein series. More precisely, let $G=\GL_n$ over $\Q$ and let $\A$ be the ring of adeles of $\Q$. Then given 
\[
G(\A)^1 =\{x\in G(\A): |\det(x)|=1\},
\]
the noninvariant trace formula is an identity of distributions, 
\[
J_\text{spec}(f) = J_\text{geom}(f), \qquad f\in C_c^\infty(G(\A)^1).
\]
We shall only be concerned with the left-hand side, which is the spectral side of the trace formula. The spectral side has a coarse expansion
\[
J_\text{spec}(f)=\sum_{\chi\in\mathfrak X}J_\chi(f)
\]
into distributions $J_\chi$ parametrized by the set of cuspidal data $\mathfrak X${,} which consists of the Weyl group orbits of pairs $(M_P,\sigma_P)$ where $M_P$ is the Levi component of a standard parabolic subgroup of $G$ and $\sigma_P$ is an irreducible cuspidal automorphic representation of $M_P(\A)^1$.  Given a Levi subgroup $M$, let $\L(M)$ be the set of Levi subgroups of $G$ containing $M$. Let us also fix a minimal Levi $M_0$, and write $\L=\L(M_0)$. Also let $\P(M)$ be the set of standard parabolic subgroups of $G$ with Levi component equal to $M$.
Then it follows from M\"uller and Speh \cite{MS} that we can express $J_\chi(f)$ as the absolutely convergent expression
\[
J_\chi(f)=\sum_{M\in \mathcal L}\sum_{L\in \mathcal L(M)}\sum_{P\in \mathcal P(M)}\sum_{s\in W^L(\a_M)_\text{reg}}a^L_{M}(s)J^L_{M,P}(f,s),
\]
where $W^L(\a_M)_\text{reg}$ is a certain subset of the Weyl group, $a^L_{M}(s)$ is a global coefficient, and $J^L_{M,P}(f,s)$ is a sum of integrals of weighted characters which we recall in \eqref{Jspec}.

Our main interest in this paper is in the contribution of the terms corresponding to $M=L$ with corank 1 in $G$, for which we derive an expression for the spectral contribution in terms of zeroes of Rankin--Selberg $L$-functions. To state the main result, we have to first introduce some notation. Let $A_P$ be the split component of the center of $M_P$ with Lie algebra $\a_M$, and let $\a^G_M$ be the kernel of the natural map from $\a_M$ to $\a_G$. Let $\alpha$ be the unique simple root of $(P,A_P)$, and let $\varpi$ be the element in $(\a^G_M)^*$ such that $\varpi(\alpha^\vee)=1$. In this case, the contribution to $J_\chi(f)$ is equal to the sum over the set of equivalence classes of irreducible unitary representations $\Pi(M(\A)^1)$ of $M(\A)^1$ of 
\begin{equation}
\label{ML1}
-a^G_M\int_{i\R}\tr(M_{\bar{P}|P}(z\varpi)^{-1}M'_{\bar{P}|P}(z\varpi)\rho_{\chi,\pi}(P, z\varpi,f))d|z|,
\end{equation}
where $a^G_M$ is the coefficient defined in \eqref{agm}, $M_{\bar{P}|P}$ is a certain intertwining operator and $\rho_{\chi,\pi}(P, z\varpi)$ is an induced representation defined in Section \ref{fse}. We write $h_{\chi,\pi}(z)=h_\pi(z)$ for the trace of the operator $\rho_{\chi,\pi}(P, z\varpi,f)$, and $\hat{h}_{\chi,\pi}(z)=\hat{h}_\pi(z)$ for its Mellin transform.

Given a pair of standard parabolic subgroups $P,P'$, we define the Weyl set $W(\a_P,\a_{P'})$ as the set of
distinct linear isomorphisms from $\a_P\subset \a_0$ onto $\a_P'\subset \a_0$ obtained by restriction
of elements in the Weyl group $W_0^G$ of $G$. We say that a class $\chi\in\mathfrak X$ is unramified if for every pair $(M_P,\sigma_P)$ in $\chi$, the stabilizer $W(\a_P,\a_P)$ is trivial, and we call $\chi$ ramified otherwise. 

\begin{theorem}
\label{ML}
Let $\chi$ be an unramifed cuspidal automorphic datum associated to the maximal Levi $M\simeq \GL_{n_1}\times \GL_{n_2}$, and let $\pi_i$ be {unramified} cuspidal automorphic representations of $\GL_{n_i}$ for $i=1,2$. Then \eqref{ML1} is equal to the sum over $\Pi(M(\A)^1)$ of the product of $a^G_M$ and
\begin{align}
\label{bigsum}&\sum_{\rho}2\pi h_\pi(\rho)-2\pi \delta(\pi_1,\pi_2) - \int_{i\R}\frac{L'}{L}(1+z,\pi_1\times\tilde\pi_2)(h_\pi(z) + h_\pi(1+z))dz\\
&- 2\hat{h}_\pi(1)\log A - \sum_{{p}\in S}\int_{i\R}N_{\bar P|P}({\pi_p},z\varpi)^{-1}\frac{d}{dz}N_{\bar P|P}({\pi_p},z\varpi)h_\pi(z)d|z|\notag
\end{align}
where $\rho$ runs over nontrivial zeroes of $L({z},\pi_1\times\tilde{\pi}_2)$. If $\chi$ is ramified, the same formula holds except with the additional term \eqref{ram}.
\end{theorem}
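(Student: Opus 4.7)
The plan is to combine the Langlands--Shahidi normalization of the intertwining operator $M_{\bar P|P}$ with the Hadamard factorization of the completed Rankin--Selberg $L$-function.

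First I would use that, for the maximal parabolic $P$ with Levi $M \simeq \GL_{n_1}\times\GL_{n_2}$ and cuspidal representation $\pi = \pi_1\boxtimes\pi_2$, the Langlands--Shahidi method gives the factorization
\[
M_{\bar P|P}(z\varpi) = r(z,\pi)\,N_{\bar P|P}(\pi,z\varpi), \qquad r(z,\pi) = \frac{\Lambda(z,\pi_1\times\tilde\pi_2)}{\Lambda(1+z,\pi_1\times\tilde\pi_2)\,\epsilon(z,\pi_1\times\tilde\pi_2)},
\]
where $\Lambda$ denotes the completed Rankin--Selberg $L$-function and $N_{\bar P|P}(\pi,z\varpi) = \bigotimes_p N_{\bar P|P}(\pi_p,z\varpi)$ is a tensor product of local normalized operators, equal to the identity at every $p \notin S$. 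Taking the logarithmic derivative and then the trace against $\rho_{\chi,\pi}(P,z\varpi,f)$, and using that $\chi$ is unramified so that the induced representation is irreducible at generic $z \in i\R$, splits the integrand in \eqref{ML1} as
\[
\tr\bigl(M^{-1}_{\bar P|P}M'_{\bar P|P}(z\varpi)\,\rho_{\chi,\pi}(P,z\varpi,f)\bigr) = \tfrac{r'}{r}(z,\pi)\,h_\pi(z) + \sum_{p\in S}\tr\bigl(N_{\bar P|P}(\pi_p,z\varpi)^{-1}\tfrac{d}{dz}N_{\bar P|P}(\pi_p,z\varpi)\,\rho_{\chi,\pi}(P,z\varpi,f)\bigr).
\]
The local sum on the right, after multiplication by $-a_M^G$, is exactly the last term of \eqref{bigsum}.

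Next I would analyze the scalar contribution. Writing $\epsilon(z,\pi_1\times\tilde\pi_2) = W\cdot A^{1/2-z}$ for the absolute conductor $A$ gives
\[
\tfrac{r'}{r}(z,\pi) = \tfrac{\Lambda'}{\Lambda}(z,\pi_1\times\tilde\pi_2) - \tfrac{\Lambda'}{\Lambda}(1+z,\pi_1\times\tilde\pi_2) + \log A,
\]
so that the $\log A$ piece integrated against $h_\pi(z)$ on $i\R$ produces the $-2\hat h_\pi(1)\log A$ term. For the two $\Lambda'/\Lambda$ pieces I would apply the Hadamard factorization of $\Lambda(s,\pi_1\times\tilde\pi_2)\,s^{\delta(\pi_1,\pi_2)}(1-s)^{\delta(\pi_1,\pi_2)}$ (entire of order $1$), and shift the contour of $\int_{(0)}\tfrac{\Lambda'}{\Lambda}(z,\pi_1\times\tilde\pi_2)\,h_\pi(z)\,dz$ across the critical strip. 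The residue theorem then converts this to $\sum_\rho 2\pi h_\pi(\rho)$ from the nontrivial zeros, together with $-2\pi\delta(\pi_1,\pi_2)$ from the pole at $s=1$. After splitting $\Lambda = L\cdot L_\infty$ and absorbing $L'_\infty/L_\infty$ into the archimedean ($p=\infty$) normalized operator term in $S$, the surviving residual integral combined with $-\int_{i\R}\tfrac{L'}{L}(1+z,\pi_1\times\tilde\pi_2)h_\pi(z)\,dz$ produces, via the change of variable $z \mapsto z-1$ applied to the shifted contour, exactly the integral against $h_\pi(z) + h_\pi(1+z)$ appearing in \eqref{bigsum}.

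For the ramified case, the same strategy applies except that the trace decomposition acquires an additional scalar correction from the nontrivial $W(\a_P,\a_P)$-orbit structure, which contributes precisely the term \eqref{ram}. The hard part will be the contour shift: one must invoke the standard convexity and zero-density bounds for Rankin--Selberg $L$-functions to justify absolute convergence of the residue sum and the vanishing of the horizontal-line contributions, and align the archimedean $L$-factor with the archimedean normalized operator so that only the finite $L'/L$ survives in the integral paired with $h_\pi(z) + h_\pi(1+z)$.
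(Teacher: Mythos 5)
Your overall strategy coincides with the paper's: factor $M_{\bar P|P}$ as $r\cdot N_{\bar P|P}$ via the Langlands--Shahidi normalization, take the logarithmic derivative so the local terms split off as the last sum in \eqref{bigsum}, and evaluate the normalizing-factor contribution by shifting the contour across the critical strip, with the residue theorem producing $\sum_\rho 2\pi h_\pi(\rho)$, the pole terms giving $-2\pi\delta(\pi_1,\pi_2)$, the right edge combining with the $-\tfrac{L'}{L}(1+z)$ piece to give the integral against $h_\pi(z)+h_\pi(1+z)$, and the $\epsilon$-factor giving $-2\hat h_\pi(1)\log A$; your appeal to zero-density and log-derivative bounds for the horizontal edges is exactly the paper's mechanism (it chooses a sequence $T_m$ avoiding zeroes). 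The one substantive divergence is bookkeeping: the paper's normalizing factor \eqref{rPP} is written with the \emph{finite} $L$-function, so the terms of \eqref{bigsum} -- the integral of $\tfrac{L'}{L}(1+z,\pi_1\times\tilde\pi_2)$ and the sum over $p\in S$ of the standard local normalized operators -- are stated in that normalization. By instead putting the completed $\Lambda$ into $r$ and absorbing $L_\infty'/L_\infty$ into a redefined archimedean operator, you obtain an equivalent identity but not literally the displayed one; to land on \eqref{bigsum} as stated you should either work with the finite $L$ throughout (as the paper does) or explicitly track the archimedean factor back out at the end. Relatedly, the Hadamard factorization is not needed here: the rectangular contour argument you also invoke already suffices, and with $\Lambda$ in place of $L$ one should check that no ``trivial'' zeroes of $L$ inside the strip (poles of $L_\infty$) are being silently discarded, since the theorem counts zeroes of $L$ in the critical strip. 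Finally, your account of the ramified case slightly misplaces the source of \eqref{ram}: it is not a correction to the trace decomposition of the intertwining operator but the discrete contribution of the nontrivial $s\in W^L(\a_M)_{\mathrm{reg}}$ in the fine spectral expansion \eqref{Jspec}, which is simply carried along unchanged.
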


While we have worked  specifically with the group $\GL_n$, which leads us to Rankin--Selberg $L$-functions that are of particular interest to analytic number theory, we note that the generalization of these results to a maximal Levi in a general reductive group is largely formal, in which case we encounter other automorphic $L$-functions that arise from the Langlands--Shahidi method. 

Let us write the expression that we have obtained in \eqref{bigsum} as
\[
\sum_{\rho}2\pi h_\pi(\rho) - D_{\pi_1,\pi_2}(f),
\]
where we continue to suppress the dependence on $\chi$. Then Theorem \ref{ML} can be expressed more succinctly as 
\[
a^M_{M}(s)J^M_{M,P}(f,s) = \sum_{\pi\in\Pi(M(\A)^1)} a^G_M\left(\sum_{\rho}2\pi h_\pi(\rho) - D_{\pi_1,\pi_2}(f)\right).
\]
Here again $\rho$ runs over the nontrivial zeroes of $L(s,\pi_1\times\tilde\pi_2)$, where by nontrivial we shall always means zeroes in the critical strip, and $D_{\pi_1,\pi_2}$ is a distribution on $C_c^\infty(G(\A)^1)$ given in explicit terms.

We provide two applications of this method; the first is a generalization of the main theorem of \cite{W}, giving a lower bound on the sum of zeroes of Rankin--Selberg $L$-functions. For any $f_0\in C_c^\infty(G(\A)^1)$, let us denote $f^*(x)=\overline{f_0(x^{-1})}$.

\begin{corollary}
\label{weil}
For any $f= f_0*f_0^*$ with $f_0\in C_c^\infty(G(\A)^1)$ and $\pi\in \Pi(M(\A)^1)$, we have 
\[
\sum_\rho h_\pi(\rho) \ge  D_{\pi_1,\pi_2} (f)
\]
where $D_{\pi_1,\pi_2}(f)$ is the distribution defined by the terms above.
\end{corollary}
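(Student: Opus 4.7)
The plan is to apply Theorem~\ref{ML} to test functions of the form $f = f_0*f_0^*$ and to extract the bound from a positivity of the quantity \eqref{ML1}. For such $f$ the convolution operator factors as
\[
\rho_{\chi,\pi}(P, z\varpi, f) = \rho_{\chi,\pi}(P, z\varpi, f_0)\,\rho_{\chi,\pi}(P, z\varpi, f_0)^*,
\]
which is positive self-adjoint on the unitary induced representation for $z \in i\R$; in particular $h_\pi(z) = \|\rho_{\chi,\pi}(P, z\varpi, f_0)\|_{\mathrm{HS}}^2 \ge 0$ on the imaginary axis. This is the standard Weil--Selberg positivity input.

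The crux is to show that \eqref{ML1} itself is non-negative for $f = f_0*f_0^*$. I would approach this either via Arthur's truncated inner product formula and the Maass--Selberg relations, which should express the integral $-\int_{i\R}\tr(M^{-1}M'\,\rho_{\chi,\pi}(P, z\varpi, f))\,d|z|$ as a positive Hermitian form in $f_0$, or by identifying \eqref{ML1} as the dominant corank-one contribution to $J_\chi(f) = \|R_\chi(f_0)\|_{\mathrm{HS}}^2 \ge 0$ and verifying that the remaining terms in the M\"uller--Speh expansion either vanish or carry the correct sign when $\chi$ is associated to a maximal Levi of corank one.

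Combining this positivity with Theorem~\ref{ML} yields
\[
\sum_{\pi\in\Pi(M(\A)^1)} a^G_M\!\left(2\pi\sum_\rho h_\pi(\rho) - D_{\pi_1,\pi_2}(f)\right) \ge 0,
\]
and to isolate the inequality for a fixed $\pi$ I would choose $f_0$ whose spectral support is concentrated near $\pi$, so that only one summand survives; rearranging then gives $\sum_\rho h_\pi(\rho) \ge D_{\pi_1,\pi_2}(f)$ up to the conventional factor of $2\pi$.

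The main obstacle is the non-negativity of \eqref{ML1} itself. Although $\rho_{\chi,\pi}(P, z\varpi, f)$ is positive and $M_{\bar P|P}^{-1}M'_{\bar P|P}$ is self-adjoint on $i\R$, their product is not a priori of definite sign, since the latter operator has indefinite spectrum. The required positivity must come either from a Maass--Selberg-style identity realizing \eqref{ML1} directly as a norm squared in $f_0$, or from the global non-negativity $J_\chi(f_0*f_0^*) \ge 0$; in the latter route, controlling the remaining M\"uller--Speh terms for a general unramified cuspidal datum $\chi$ is the most delicate part of the argument.
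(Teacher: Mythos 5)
Your identification of the central difficulty is exactly right: $\rho_{\chi,\pi}(P,z\varpi,f)$ is positive but $M_{\bar P|P}^{-1}M'_{\bar P|P}$ is not, so pointwise positivity of $h_\pi$ on $i\R$ gets you nowhere and the non-negativity of \eqref{ML1} must come from a Maass--Selberg-type input. Your ``route (a)'' is in fact the paper's route: the proof invokes Arthur's truncated inner product formula
\[
(\Omega^T_{\chi,\sigma}(P,\lambda)\phi',\phi)=\int_{G(\Q)\backslash G(\A)^1}\Lambda^TE(x,\phi',\lambda)\overline{\Lambda^TE(x,\phi,\lambda)}\,dx ,
\]
so that $\Omega^T$ is manifestly a positive-definite self-adjoint operator, and $J_\chi^T(f)$ becomes an integral of $\tr(\Omega^T\rho_\chi(\sigma,\lambda,f))$, i.e.\ a trace of a product of two positive-definite operators, hence non-negative. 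Your ``route (b)'' is a red herring: for an unramified cuspidal datum $\chi$ attached to a maximal Levi, $J_\chi(f)$ \emph{is} the corank-one term \eqref{ML1}; there are no leftover M\"uller--Speh terms to control, so the dichotomy you raise does not actually arise.

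The genuine gap is in your last step. Having obtained
\[
\sum_{\pi\in\Pi(M(\A)^1)} a^G_M\Bigl(2\pi\sum_\rho h_\pi(\rho) - D_{\pi_1,\pi_2}(f)\Bigr)\ge 0,
\]
you propose to isolate a single $\pi$ by choosing $f_0$ with spectral support concentrated near $\pi$. But the corollary asserts the inequality for \emph{every} $f_0\in C_c^\infty(G(\A)^1)$ and \emph{every} $\pi$ simultaneously; specializing $f_0$ proves it only for that restricted family, and the spectral projection onto the $\pi$-isotypic subspace is not itself compactly supported, so one cannot simply absorb it into $f_0$ without a further limiting argument. The paper sidesteps this entirely: since $\Omega^T_{\chi,\sigma}(P,\lambda)$ is positive-definite, its \emph{restriction} to each $\pi$-isotypic subspace is still positive-definite, and $\rho_\chi(\sigma,\lambda,f)$ preserves these subspaces, so $\tr(\Omega^T|_\pi \cdot \rho_\chi|_\pi)\ge 0$ term by term, for arbitrary $f_0$. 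This is the step you are missing; your inequality on the sum over $\pi$ by itself does not yield the statement, and the test-function localization you suggest does not repair it for general $f_0$.
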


\noindent This falls short of giving a necessary and sufficient condition for the Riemann hypothesis for $L(s,\pi_1\times\tilde\pi_2)$, parallel to the Weil criterion for the Riemann hypothesis \cite{W72}, for the reason that our Fourier transforms are nonabelian, and the usual proofs of the criterion require explicit Fourier transforms of carefully chosen test functions, which are not available in the general nonabelian setting.

As a second application, we deduce from Langlands' proof of base change for GL$_2$ a functorial relation between the zeroes of Hecke $L$-functions attached to a cyclic extension $E$ of $\Q$ of prime degree, with those of Hecke $L$-functions over $\Q$. Given a function $f\in C_c^\infty(G(\A_E))$, we write $f'\in C_c^\infty(G(\A))$ for the base change transfer of $f$. We shall write $h'_\pi$ for the corresponding character $\tr(\rho_{\chi,\pi}(P,z\varpi,f')$, and $\tilde{h}_\pi$ and $\tilde{D}_{\pi_1,\pi_2}$ for the twisted analogues with respect to a fixed element $\sigma$ in Gal$(E/\Q)$.
\begin{corollary}
\label{bc}
Let $M$ be the set of diagonal matrices of $\GL_2$, and $\eta_E=(\mu_E,\nu_E)$ a unitary character of $M(E)\bs M(\A_E)$.  Then we have
\[
\sum_{\eta\mapsto \eta_E} \sum_{\rho_\eta} \left(h'_\eta(\rho_\eta) - D_{\mu,\nu}(f')\right) = \sum_{\rho_{\eta_E}}\left( \tilde{h}_{\eta_E}(\rho_{\eta_E}) - \tilde{D}_{\mu_E,\nu_E}(f)\right)
\]
where the first sum is over characters $\eta=(\mu,\nu)$ such that  $\eta_E = \eta \circ N_{E/\Q}$, and the other sums run over nontrivial zeroes $\rho$ and $\rho_E$ of the corresponding {$L$-functions} $L(s,\chi)$ and $L(s,\chi_E)$, counted with multiplicity.
\end{corollary}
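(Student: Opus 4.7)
The plan is to apply Theorem \ref{ML} separately on the $\Q$ side with test function $f'$ and on the $E$ side in a $\sigma$-twisted form with test function $f$, and then identify the resulting expressions via Langlands' cyclic base change.

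First, applied to $\GL_2/\Q$ with $M$ the diagonal torus, Theorem \ref{ML} expresses the contribution of each unitary character $\eta=(\mu,\nu)$ of $M(\A)^1$ to the Eisenstein part of the spectral side as $a^G_M(\sum_{\rho_\eta}2\pi h'_\eta(\rho_\eta)-D_{\mu,\nu}(f'))$, where $\rho_\eta$ runs over nontrivial zeros of the Hecke $L$-function $L(s,\mu\tilde\nu)$ (which in the $\GL_1\times\GL_1$ case plays the role of the Rankin--Selberg $L$-function). Summing over those $\eta$ satisfying $\eta\circ N_{E/\Q}=\eta_E$ produces the left-hand side of the corollary.

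Second, I would establish a $\sigma$-twisted analogue of Theorem \ref{ML} for the $\sigma$-twisted trace formula of $\GL_2(\A_E)$. The proof adapts by composing the induced representation $\rho_{\chi,\pi_E}(P,z\varpi,f)$ with the intertwiner $I_\sigma$ implementing $\sigma$-invariance (so that its trace becomes $\tilde h_{\eta_E}$), and by $\sigma$-twisting the local normalizing factors. The global normalization then involves $L(s,\mu_E\tilde\nu_E)$, so the contour shift in the logarithmic-derivative term produces the zeros $\rho_{\eta_E}$, while the remaining archimedean, local, and constant contributions assemble into $\tilde D_{\mu_E,\nu_E}(f)$. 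This yields the right-hand side for each $\sigma$-invariant $\eta_E$. Finally, I invoke Langlands' spectral comparison for cyclic base change on $\GL_2$: for matching $f'=\mathrm{bc}(f)$, the $\sigma$-twisted Eisenstein contribution from $\eta_E$ on the $E$ side equals the sum over $\eta$ with $\eta\circ N_{E/\Q}=\eta_E$ of the untwisted Eisenstein contributions on the $\Q$ side. Equating these two expressions and cancelling the common factor $a^G_M$ yields the corollary.

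The main obstacle is the derivation of the $\sigma$-twisted version of Theorem \ref{ML}. One must verify that the factorization of the global intertwiner $M_{\bar P|P}$ into a Rankin--Selberg $L$-function ratio times local normalizations remains valid in the twisted setting; for the $\GL_1\times\GL_1$ Levi this reduces to an analysis of Tate's local functional equations over $E$ incorporating the $\sigma$-twist. A secondary issue is the spectral matching of individual cuspidal-data contributions (rather than of the full trace formula), which for characters of tori on $\GL_2$ follows from the compatibility of the Eisenstein series decomposition with base change for idele class characters, together with the fact that a character of $M(\A_E)$ lies in the image of base change if and only if it is $\sigma$-invariant.
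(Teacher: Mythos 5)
Your proposal is correct and follows essentially the same route as the paper: apply the explicit-formula (residue) argument of Theorem \ref{ML} to the continuous-spectrum terms on both sides and glue them via Langlands' base change comparison for $\GL_2$. The only organizational difference is that the paper avoids developing an independent $\sigma$-twisted trace formula by quoting Langlands' already-explicit identity between the terms (10.3) and (10.28) of \cite{BC}, so the ``twisted analogue'' reduces to running the same contour argument on the normalizing factor $m_E^{-1}m_E'$, a ratio of Hecke $L$-functions over $E$.
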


Such a relation would of course be expected by the general principle of functoriality, but we are not aware of results of this nature in the literature. Our present methods are limited to the case of $n=2$ as the continuous spectral terms are no longer treated in such an explicit form in higher rank, though the identity that we obtain is certainly suggestive of other such relations.

\subsection{Relation to other work} We note that a crucial difference between our method and other approaches to explicit formula in higher rank is that we encounter {\em nonabelian} Fourier transforms of compactly-supported test functions on $G$. This naturally arises as we work from the standpoint of the trace formula, so that our methods rely on invariant harmonic analysis. Also, we remark that Shahidi \cite{Sha} has introduced a new normalisation of the intertwining operator, which may lead to a simplification of the terms in the main theorem.

Explicit formulas for $L$-functions on $\GL_n$ are now a cornerstone in the analytic theory of automorphic forms. They are widely used for example to study low-lying zeroes of families of $L$-functions, beginning with the groundbreaking work of Iwaniec, Luo, and Sarnak \cite{ILS}. More recently, from an algebraic point of view, Chenevier and Lannes \cite[Theorem 9.3.2]{CL} were able to prove using the explicit formula for Rankin--Selberg $L$-functions the finiteness of algebraic cuspidal automorphic representations of PGL$_n$ of motivic weight $w\le  22$. We also note the recent works \cite{BTZ,BH} related to zeroes of Rankin--Selberg $L$-functions.

\subsection{Summary} This paper is organized as follows. In Section \ref{TF} we recall the spectral expansion of the Arthur--Selberg trace formula. In Section \ref{RS} we recall the properties of Rankin--Selberg $L$-functions that we require, and apply the residue method to produce the sums over zeroes in the spectral expansion and the proof of Theorem \ref{ML}. In Section \ref{app} we then deduce Corollaries \ref{weil} and \ref{bc} from the main theorem.

\section{The spectral side of the trace formula}
\label{TF}
\subsection{Preliminaries} 

Fix $G=\GL_n$. For simplicity, we shall work over $\Q$, though most of our analysis is valid for number fields, since we work adelically. Let $\A=\A_\Q$ denote the adele ring of $\Q$. Let $G(\A)^1$ be the set of $x\in G(\A)$ such that $|\det(x)|=1$. Denote by $\rho(g)$ the representation of $G$ on $L^2(G(\Q)\backslash G(\A)^1)$ acting by right translation. Given any  $f\in C^\infty_c(G)$, one defines the integral operator
\[\rho(f)=\int_Gf(x)\rho(x)dx\]
acting on the $L^2$-space. Note that for functions of the form 
\[
f(x)=f_0(x)*f_0^*(x)=f_0(x)*\overline{f_0(x^{-1})}, \qquad f_0\in C^\infty_c(G)
\]
where $*$ denotes the usual convolution, the operator is self-adjoint and positive definite \cite[2.4]{GGPS}, and as a consequence its restriction to any invariant subspace is also positive definite. We note that if $\pi$ is a {subrepresentation of $\rho$}, one sees that the identity
\[
\pi(f) = \pi(f_0*f_0^*) = \pi(f_0) \pi(f_0^*)
\]
holds.
By the theory of Eisenstein series, $L^2(G(\Q)\backslash G(\A)^1)$ decomposes under the action of $\rho$ into discrete and continuous parts, and the discrete spectrum is made out of cuspidal and one-dimensional subspaces:
\[L^2_\text{disc}(G)\oplus L^2_\text{cont}(G)=L^2_\text{cusp}(G)\oplus L^2_\text{res}(G)\oplus L^2_\text{cont}(G)\]
where for short we have written $L^2(G)$ for $L^2(G(\Q)\backslash G(\A)^1)$, and the cuspidal spectrum consist of cusp forms. The continuous spectrum is described by the inner product of Eisenstein series, and the residual spectrum described by the residues of Eisenstein series. An explicit description of these is given in the work of {Mœglin} and Waldspurger \cite{MW}. 

We shall recall some definitions in order to describe the spectral side of Arthur's noninvariant trace formula, referring to Part 1 of \cite{A} for details. Let $P$ be a standard parabolic subgroup of $G$ with unipotent radical $N_P$ and Levi component $M\supset M_0$. Let $A_P$ the split component of $M_P$, with Lie algebra $\a_M\simeq\R^r$. That is, if $X(M)$ is the group of characters of $M$ over $\Q$, then
\[
\a_M=\text{Hom}(X(M)_\Q,\R),\quad\a^*_M=X(M)_\Q\otimes\R.
\]
The Weyl group $W(\a_M)$ coincides with the symmetric group on $r$ letters. Let $A_P(\R)^0$ be the connected component of the identity of $A_P(\R)$, and let $K$ be a maximal compact subgroup of $G(\Q)$. Define $\H_P$ to be the Hilbert space of measurable functions
\[
\phi: N_P(\A) M_P(\Q)A_P(\R)^0\bs G(\A) \to \C
\]
such that for any $x\in G(\A)$ the function $\phi_x(m)=\phi(mx)$ belongs to the space $L^2_\text{disc}(M_P(\Q)\bs M_P(\A)^1)$ and such that
\[
\int_K\int_{M_P(\Q)\bs M_P(\A)^1}|\phi(mk)|^2dm\ dk
\]
is finite. Here $M_P(\A)^1 = M_P(\A)\cap G(\A)^1$. Then given $x\in G(\A)$, $\phi\in \H_P$, and $\lambda\in \a^*_{M,\C} = \a^*_M\otimes \C$, we define the Eisenstein series
\[
E(x,\phi,\lambda) = \sum_{\delta\in P(\Q)\bs G(\Q)} \phi(\delta x) e^{(\lambda + \rho_P)(H_P(\delta x))}
\]
where $e^{\rho_P(H_P(\cdot))}$ is the square root of the modular function on $P$ \cite[\S7]{A}. It converges absolutely for Re$(\lambda)\gg0$. 

Let $P$ be a standard parabolic subgroup of $G$ with Levi component $M_P$, and $\sigma$ an irreducible cuspidal automorphic representation of $M_P(\A)^1$. We call the set of $\chi=(P,\sigma)$ cuspidal automorphic data, determined up to conjugacy, denoted $\mathfrak X$. There is an orthogonal decomposition of the discrete spectrum
\[
L^2_\text{disc}(G(\Q)\backslash G(\A)^1)=\bigoplus_{\chi\in\mathfrak X} L^2_{\text{disc},\chi}(G(\Q)\backslash G(\A)^1)
\]
into $G(\A)$-invariant subspaces. Define $\rho_{\chi}(\sigma,\lambda)$ to be the induced representation acting on functions $\phi$ on $G(\A)$ whose restriction to $M_P(\A)^1$ belongs to $L^2_\chi(M_P(\Q)\bs M_P(\A)^1)$ consisting of functions such that 
\[(\rho_{\chi}(\sigma,\lambda,y)\phi)(x)=\phi(xy)e^{(\lambda+\rho_P)H_P(xy)}e^{-(\lambda+\rho_P)(H_P(x))}\]
which defines the integral operator
\[(\rho_{\chi}(\sigma,\lambda,f)\phi)(x)=\int_{G(\A)}f(y)(\rho_{\chi}(\sigma,\lambda,y)\phi)(x)dy\]
for any $f\in C_c^\infty(G(\A)^1)$ and $\phi$ as above. We note that the trace formula is in fact valid for a larger class of noncompactly supported test functions $\mathcal C^1(G(\A)^1)$ by the results of M\"uller and Speh \cite{MS} in the case of GL$_n$, which we do not need here.

The general formula for the spectral side is the absolutely convergent sum
\[
J(f) = \sum_{\chi\in\mathfrak X} J_\chi(f),\quad f\in C_c^\infty(G(\A)^1).
\]

\subsection{The fine spectral expansion} 
\label{fse}
We now recall the fine spectral expansion according to \cite{MS}. For any pair of Levi subgroups $M\subset L$, there is a natural surjection from $\mathfrak a_M$ onto $\mathfrak a_L$ with kernel denoted by $\mathfrak a^L_M$.   Let $\Pi_\text{disc}(M(\A)^1)$ be the set of $\pi\in\Pi(M(\A)^1)$ which are equivalent to an irreducible subrepresentation of the regular representation of $M(\A)^1$ in $L^2(M(\Q)\backslash M(\A)^1)$. Then the spectral expansion can be written as
\begin{equation}
\label{Jspec}
J_\chi(f)=\sum_{M\in \mathcal L}\sum_{L\in \mathcal L(M)}\sum_s \sum_{P\in \mathcal P(M)}a^L_{M}(s)J^L_{M,P}(f,s)
\end{equation}
where sum over $s$ is taken over elements in 
\[
W^L(\mathfrak a_M)_\text{reg}=\{t\in W^L(\mathfrak a_M):\ker(1-t)=\mathfrak a_L\},
\]
where $W^L(\mathfrak a_M)$ is the Weyl group of $A_M$ relative to $L$. Also, we have the global coefficient
\begin{equation}
\label{alms}
a^L_M(s)={|W_0^M|}{|W^G_0|^{-1}|\P(M)|^{-1}}|\det(s-1)_{\mathfrak a^L_M}|^{-1},
\end{equation}
and the sum of integrals
\[
J^L_{M,P}(f,s)=\sum_{\pi\in\Pi_\text{disc}(M(\A)^1)}\int_{i\mathfrak a^*_L/\mathfrak a^*_G}\tr(\mathfrak M_L(P,\lambda)M_{P|P}(s,0)\rho_{\chi,\pi}(P,\lambda, f))d \lambda.
\]
Here $M_{P|P}(s,0)$ is the global intertwining operator $M_{Q|P}(s,\lambda)$ at $P=Q$ and $\lambda\in i\a_L^*$ which for Re$(\lambda)$ in a certain chamber, can be defined by an absolutely convergent integral and admits an analytic continuation to a meromorphic function of $\lambda$. Also, $\mathfrak M_L(P,\lambda)$ is given by its restriction to $\pi$,
\[\mathfrak M_L(P,\pi,\lambda)=\sum_S \mathfrak N'_S(P,\pi,\lambda)\nu^S_L(P,\pi,\lambda)\]
where $S$ runs over parabolic subgroups containing $L$, $\mathfrak N'_S(P,\pi,\lambda)$ is built out of normalized intertwining operators on the local groups $G(\Q_p)$ and $\nu^S_L(P,\pi,\lambda)$ is a scalar valued function which is defined in terms of normalizing factors, the general formulae for which we refer to \cite{arteis2}. Finally, $\rho_{\chi,\pi}(P,\lambda)$ is an induced representation with complex parameter $\lambda,$ and acts as the regular representation on the subspace of automorphic forms $\phi$ where $\phi_x(m)=\phi(mx)$ belongs to the  {$\pi$-isotypic} subspace of $L^2(M_P(\Q)\backslash M_P(\A)).$

The important point is that the global intertwining operator is built out of normalized intertwining operators 
\[
M_{Q|P}(\pi,\lambda)=r_{Q|P}(\pi,\lambda)N_{Q|P}(\pi,\lambda).
\]
Consider the following terms in the expansion \eqref{Jspec}. 
\begin{enumerate}
\item[(a)]
$M=L\neq G$, and $\dim\mathfrak a^G_L=1$. In this case, $|\P(M)|=1$, $\a_M^L$ is trivial, and the operator $\mathfrak M_L(P,\lambda)$ has a simple description. Let $\alpha$ be the unique simple root of $P$, $\varpi$ be the element in  $(\mathfrak a^G_L)^*=\R$ such that $\varpi(\alpha^\vee)=1$, and $\bar P$ the opposite parabolic of $P$. {Also denote by} $\text{vol}(\mathfrak a^{G}_M/\mathbb Z(\alpha^\vee))$ is the volume of the lattice in $\a^G_M$ spanned by $\alpha^\vee$. Then setting $\lambda = z\varpi, z\in i\R$, we have
\[
\mathfrak M_L(P,\lambda) = -\text{vol}(\mathfrak a^G_M/\mathbb Z\alpha^\vee)M_{\bar{P}|P}(z\varpi)^{-1}M'_{\bar{P}|P}(z\varpi).
\]
It follows then that the term $a^M_{M}(s)J^M_{M,P}(f,s)$ is equal to the sum over $\pi\in\Pi(M(\A)^1)$ of 
\[
-{|W^M_0|}{|W^G_0|^{-1}}\text{vol}(\mathfrak a^G_M/\mathbb Z\alpha^\vee)
\int_{i\R}\tr(M_{\bar{P}|P}(z\varpi)^{-1}M'_{\bar{P}|P}(z\varpi)\rho_{\chi,\pi}(P, z\varpi,f))d|z|
\]
where $M_{\bar{P}|P}^{-1}M'_{\bar{P}|P}$ separates into
\begin{equation}
\label{logd}
r_{P|\bar P}(\pi,z)^{-1}r'_{P|\bar P}(\pi,z)+\sum_{{p}\in S}N_{\bar P|P}({\pi_p},z\varpi)^{-1}\frac{d}{dz}N_{\bar P|P}({\pi_p},z\varpi).
\end{equation}
Here $N_{\bar{P}|P}$ is the local intertwining operator, and $S$ runs over a finite set of places outside of which $N_{\bar{P}|P}$ is unramified{, and $r_{P|\bar P}(\pi,z)$ is a normalizaing factor of the intertwining operator, defined as in \eqref{rPP} below}.

\item[(b)]
$M \neq L \neq G$, and $\dim\a^G_L=1$. In this case $M$ is no longer maximal, and the contribution to \eqref{Jspec} is then given by the sum over $s\in W^L(\mathfrak a_M)_\text{reg}$, $\pi\in\Pi(M(\A)^1)$, and $P\in \P(M)$ of the term
\[
a^G_{M}(s)J^G_{M,P}(f,s)
\]
which is equal to
\[-a^L_M(s)\text{vol}(\mathfrak a^G_L/\mathbb Z\alpha^\vee)
\int_{i\R}\tr(M_{\overline{P}|P}(z\varpi)^{-1}M'_{\overline{P}|P}(z\varpi)M_P(s,0)\rho_{\chi,\pi}(P, z\varpi,f))d|z|,
\]
where $M_{\bar{P}|P}^{-1}M'_{\bar{P}|P}$ again separates as above. We do not treat this case in this paper, but we note that it is the next simplest case.
\end{enumerate}

In particular, these two cases represents the terms on the spectral side containing one-dimensional integrals.

\section{Zeroes of Rankin--Selberg $L$-functions}
\label{RS}
\subsection{Rankin--Selberg $L$-functions}

To fix notation, we briefly review Rankin--Selberg $L$-functions. Let $P$ be a standard maximal parabolic subgroup with Levi component $M\simeq \GL_{n_1}\times \GL_{n_2}$. Then for each $\pi_i\in\Pi(\GL_{n_i}), i=2$ we associate the Rankin--Selberg $L$-function  $L({z},\pi_1,\times\tilde\pi_2)$. It satisfies the functional equation
\[L({z},\pi_1\times\tilde{\pi_2})=\epsilon({z},\pi_1\times\tilde{\pi_2})L(1-{z},\tilde\pi_1\times{\pi_2})\]
in which
\[\epsilon({z},\pi_1\times\tilde{\pi_2})=W(\pi_1,\tilde\pi_2)A^{\frac{1}{2}-{z}}\]
where $W(\pi_1,\tilde\pi_2)$ is the root number {of absolute value 1} satisfying $W(\pi_1,\tilde\pi_2)W(\tilde\pi_1,\pi_2)=1$, and {
\[
A = \prod_{p} A_p,\quad A_p=p^{f_p(\pi_1\times\tilde\pi_2)}.
\]
Here $f_p(\pi_1\times\tilde\pi_2)$ is the local conductor exponent, a nonnegative} integer such that $f_p(\pi_1\times\tilde\pi_2) = f_p(\tilde\pi_1\times\pi_2)$.

It is well-known by \cite{JS,JPSS} that given cuspidal representations $\pi_i$ of $\GL_{n_i}$, then $L({z},\pi_1\times\tilde\pi_2)$ is entire unless $\pi_1\simeq \pi_2\otimes|\cdot|^w$ for some $w$ in $\C$, in which case it is holomorphic except for simple poles at ${z}=1-w$ and $-w$. Moreover, it can be shown that the $L({z},\pi_1\times\tilde\pi_2)$ converges in the half plane Re$({z})>1$.  The normalizing factor in this case is then given by the quotient
\begin{equation}
\label{rPP}
r_{\bar{P}|P}(\pi_1{\times} \tilde\pi_2,{z}) 
= \frac{L({z},\pi_1\times\tilde{\pi_2})}{L(1+{z},\pi_1\times\tilde{\pi_2})\epsilon({z},\pi_1\times\tilde\pi_2)}
\end{equation}
and is a meromorphic function of ${z}$.

\subsection{Explicit formulas}

We now consider the contribution to the trace formula arising from the first case (a) $M=L\neq G$, in which $M$ is the Levi component of a maximal parabolic subgroup, hence $\dim\mathfrak a^G_L=1$. For ease of notation let us write
\begin{equation}
\label{agm}
a^G_M = |W^M_0||W^G_0|^{-1}\mathrm{vol}(\R/\mathbb Z\alpha^\vee). 
\end{equation}
If $\chi$ is unramified, $J_\chi(f)$ is equal to the sum 
\[
- \sum_{\pi\in\Pi(M(\A)^1)}a^G_M\int_{i\R}\tr(M_{\bar{P}|P}(z\varpi)^{-1}M'_{\bar{P}|P}(z\varpi)\rho_{\chi,\pi}(P, z\varpi,f))d|z|
\]
where we recall that $M_{\bar{P}|P}^{-1}M'_{\bar{P}|P}$ separates according to \eqref{logd} and that the integral is absolutely convergent. If $\chi$ is ramified, there is the additional discrete term
\begin{equation}
\label{ram}
\sum_{s\in W^L(\mathfrak a_M)_\text{reg}}\sum_{\pi\in\Pi(M(\A)^1)}a^L_M(s)\tr(M_{P|P}(s,0)\rho_{\chi,\pi}(P,0,f)).
\end{equation}
It is the generalization of the term (vi) in the trace formula for GL$_2$ in \cite[\S16]{JL}.

Now we take the logarithmic derivative of $r_{P|\bar P}(\pi,{z})$ to get
\begin{equation}
\label{logdl}
\frac{L'}{L}({z},\pi_1\times\tilde\pi_2)-\frac{L'}{L}(1+{z},\pi_1\times\tilde\pi_2)-\frac{\epsilon'}{\epsilon}({z},\pi_1\times\tilde\pi_2).
\end{equation}
The first term produces the integral
\begin{equation}
\label{int1}
-a^G_M\int_{i\R}\frac{L'}{L}(z,\pi_1\times\tilde\pi_2)\tr(\rho_{\chi,\pi}(P, z\varpi,f))d|z|,
\end{equation}
where the trace $\tr(\rho_{\chi,\pi}(P, z\varpi,f))$ lies in the Paley-Wiener space of entire, Weyl-invariant functions on $\a^*_{P,\C}$ of exponential type and rapidly decreasing on cylinders \cite{CD, BDK}. For ease of notation, we denote the trace as
\[
h_\pi(z) = \tr(\rho_{\chi,\pi}(P, z\varpi,f)).
\]
Let us write its Mellin transform as 
\[
\hat{h}_\pi(z) = {\int_{0}^\infty} h_\pi(x)x^{z-1} dx,
\]
and let {
\[
\delta(\pi_1,\pi_2) = 
\begin{cases} 
h_\pi(1-w)+ h_\pi(w) & \text{if }\pi_1\simeq \pi_2\otimes |\cdot|^w, 0\le \text{Re}(w)\le 1, \\
0, &  \text{otherwise}.
\end{cases}
\]}
We now turn to the proof of the main theorem. It expresses $J_\chi(f)$ as a sum over zeroes of Rankin--Selberg $L$-functions. 

\begin{proof}[Proof of Theorem \ref{ML}]
The method of proof is essentially that of \cite[Theorem 1.1]{W}. It will suffice to consider the normalizing factor in \eqref{logd}, as we leave the local intertwining operators as they are. We shall interpret the integral
\begin{equation}
\label{rint}
\int_{i\R}\tr(r_{\bar{P}|P}(\pi,z\varpi)^{-1}r'_{\bar{P}|P}(\pi,z\varpi)\rho_{\chi,\pi}(P, z\varpi,f))d|z|
\end{equation}
as the limit of a sum of contour integrals and apply the residue theorem to each one. Consider first the contribution of \eqref{int1} to \eqref{rint}. We shall relate the integral to the limit of a contour integral around a rectangle $R$ with vertices $(\pm iT,1\pm iT)$, oriented positively. Namely,
\begin{equation}
\label{R}
\lim_{T\to\infty}\frac{1}{2\pi i}\int_R\frac{L'}{L}(z,\pi_1\times\tilde\pi_2)h_\pi(z)dz.
\end{equation}
If $L({z},\pi_1\times\tilde\pi_2)$ has a pole at ${z}=1-w$ and $-w$, then we deform the contour as necessary along an arbitrarily small semicircle to the right (resp. left) of the poles. The right boundary is then within the region of absolute convergence, and furthermore $L({z},\pi_1\times\tilde\pi_2)$ has no poles or zeroes in the region $\Re({z})>1$. 

Along the horizontal edges, we cross the critical strip while avoiding zeroes of $L({z},\pi_1\times\tilde\pi_2)$ using zero density estimates and bounds on the logarithmic derivative of the Rankin--Selberg $L$-function (see for example \cite[\S9.3.5]{CL}, which in turn relies on \cite[I.2]{mestre}). In particular, there exists constants $c_1,c_2,$ and a sequence $T_m$ for every integer $m$ with $|m|\ge2$, such that $L({z},\pi_1\times{\tilde\pi}_2)$ has no zeroes in the horizontal strips
\[
|t\pm T_m|\leq\frac{c}{\log|m|}\qquad m<T_m<m+1,
\]
and 
\[
\left|\frac{L'}{L}(\sigma+iT_m,\pi_1\times{\tilde\pi}_2)\right| \le c_1(\log|m|)^2 , \quad -1< \sigma< 2.
\]
Then letting $T$ to tend to infinity along the sequence $T_m$ on each side, together with the rapid decay of $h_\pi(z)$, the contribution from the horizontal edges also vanish. 

We are left with the contribution from the left boundary, which in the limit is equal to our integral, and the right boundary, which is 
\[
\frac{1}{2\pi i }\int_{1-i\infty}^{1+i\infty} \frac{L'}{L}(z,\pi_1\times\tilde\pi_2)h_\pi(z)dz.
\]
If $\pi_1\not\simeq\pi_2\otimes|\cdot|^w$ {for any $w\in\C$} or $\pi_1\simeq\pi_2\otimes|\cdot|^w$ with {Re}$(w)>1$, by the residue theorem the integral \eqref{R} is equal to 
\[
 \sum_{\rho}h_\pi(\rho)\]
where the sum runs over nontrivial zeroes of $L({z},\pi_1\times\tilde\pi_2)$. If {$\pi_1\simeq \pi_2\otimes |\cdot|^w$ with} $0\le {\text{Re}(w)\le }1$, then the additional terms
\[
h_\pi(1-w) + h_\pi(w)
\]
appears. It follows then that we have a total contribution of $2\pi  a^G_M$ times 
\[
\sum_\rho h_\pi(\rho) - \delta(\pi_1,\pi_2) - \frac{1}{2\pi i }\int_{1-i\infty}^{1+i\infty} \frac{L'}{L}(z,\pi_1\times\tilde\pi_2)h_\pi(z)dz
\]
from the first term in the logarithmic derivative. Arguing similarly with the second term in \eqref{logdl}, the contribution of the first two terms can be written together as
\[
- \sum_{\pi \in \Pi(M(\A)^1)}a^G_M\int_{i\R}\frac{L'}{L}(1+z,\tilde\pi_1\times\pi_2)\left(h_\pi(z)+h_\pi(1+z)\right) d|z|.
\]
Thirdly, for the epsilon factors we have simply
\[\frac{\epsilon'}{\epsilon}({z},\pi_1\times\tilde\pi_2)=\frac{\epsilon'}{\epsilon}({z},\tilde\pi_1\times\pi_2)=-\log A,
\]
which gives $- 2\hat{h}_\pi(1) \log A$.
\end{proof}

We note that the remaining integral of the $L$-function can be expressed as a sum over primes, after shifting the contour slightly into the region of absolute convergence, as is typical in the derivation of explicit formulas.  Clearly this method works well for such $L$-functions occurring in this manner, that is by the Langlands--Shahidi method and in the maximal parabolic case, but what is missing in the general setting is the precise location of the poles of the $L$-functions.

\section{Applications}
\label{app}
\subsection{Lower bounds on sums of zeroes}
\label{sums}
 
Let $\eta$ be a Hecke character of $\Q$. We recall Weil's criterion as follows: the Riemann hypothesis for the Hecke $L$-function $L({z},\eta)$ holds if and only if the sum
\[
\sum_{\rho} h_\pi(\rho) 
\]
over nontrivial zeroes of $L({z},\eta)$ is nonnegative for all $h$ that are Fourier transforms of functions $g*g^*$ where $g$ is an even function in $C_c^\infty (\R^\times)$. 
In our case, for each distribution $J_\chi(f)$ we have an explicit formula for a family of Rankin Selberg $L$-functions, in terms of distributions arising from the trace formula. We shall prove a lower bound for the sums over zeroes. From Theorem \ref{ML} we have the identity
\begin{equation}
\label{D}
a^M_{M}(s)J^M_{M,P}(f,s) = \sum_{\pi\in\Pi(M(\A)^1)} a^G_M\left(\sum_{\rho}2\pi h_\pi(\rho) - D_{\pi_1,\pi_2}(f)\right).
\end{equation}
This relation gives us an approach to the zeroes of $L$-functions using the trace formula. The key to the analysis will be the positivity derived from the truncation operator. We can then prove the first corollary of Theorem \ref{ML}. 

\begin{proof}[Proof of Corollary \ref{weil}]

Recall from \cite[\S13]{A}{,} for example, that the spectral term $J_\chi^T(f)$ can be expressed as
\[
J_\chi^T(f)=\sum_P\frac{1}{n_P}\sum_{\sigma\in\Pi(M(\A)^1)}\int_{i\a^*_P/i\a^*_G}\tr(\Omega^T_{\chi,\sigma}(P,\lambda)\rho_{\chi}(\sigma,\lambda,f))d\lambda
\]
where the first sum is over associated parabolics $P$, $n_P$ is the number of Weyl chambers in $\a_P$, the second sum is over the set $\Pi(M(\A)^1)$ of classes of irreducible unitary representations of $M(\A)^1$, and the operator
\[
(\Omega^T_{\chi,\sigma}(P,\lambda)\phi',\phi)=\int_{G(\Q)\backslash G(\A)^1}\Lambda^TE(x,\phi',\lambda)\overline{\Lambda^TE(y,\phi,\lambda)}dx
\]
is a self-adjoint, positive-definite operator, and is asymptotically equal to
\[
\omega^T_{\chi,\sigma}(P,\lambda)=\underset{\lambda=\lambda'}{\text{val}}\sum_{Q\in P(M_P)}\sum_{s\in W(\a_P)}M_{Q|P}(0,\lambda)^{-1}M_{Q|P}(s,\lambda')\frac{e^{(s\lambda'-\lambda)(Y_Q(T))}}{\theta_Q(s\lambda'-\lambda)}
\]
with error term decaying exponentially in $T$ (see \cite[\S1]{arteis} for details). Also, $\Lambda^T$ is a truncation operator defined as follows. Let $\phi$ be a locally bounded, measurable function on $G(\Q)\backslash G(\A)^1$, and $T$ a suitably regular point in $\mathfrak a_0^+$. We then define the truncation operator
\begin{equation}
\label{trunc}
(\Lambda^T\phi)(x)=\sum_P(-1)^{\text{dim}A_P/A_G}\sum_{\delta\in P_\Q\backslash G(\Q)}\int_{N_P(\Q)\backslash N_P(\A)}\phi(n\delta x)\hat{\tau}_P(H_P(\delta x)-T)dn
\end{equation}
where $\hat{\tau}_P$ is the characteristic function of the positive Weyl chamber associated to $P$, and $H_P(x)$ is the usual height function. The inner sum is finite, while the integrand is a bounded function of $n$. In particular, if $\phi$ is in $L^2_\text{cusp}(G(\Q)\backslash G(\A)^1)$ then $\Lambda^T\phi=\phi$; also $\Lambda^T E(g,\phi,s)$ is square integrable. It is self-adjoint and idempotent, hence an orthogonal projection. 

Fix a minimal Levi $M_0$ and let $P_0$ be any parabolic with Levi component $M_0$. Let $\a_0$ be the Lie algebra of the split component $A_0$ of the center of $M_0$, and let $T$ be a point in the positive chamber $\a^+_0$ in $\a_0$, suitably regular in the sense that its distance from the walls of $\a^+_0$ is large.  It is a fundamental result of Arthur that $J^T_\chi(f)$ is a polynomial in $T$, which a priori is a sufficiently regular vector in $\a^+_0${;} % in the sense that $\alpha(T)$ is large for each root $\alpha\in\Delta_0$, 
thus it extends to all $\a_0$. For our purposes, it will simplify the formula by evaluating $T$ at a distinguished point $T_0$, which for GL$_n$ is just 0. By definition, 
\[J_\chi(f)=J^{T_0}_\chi(f)\]
is therefore the constant term, and is what finally appears in the noninvariant trace formula. 
Taking $T=0$,  the positivity of $J_\chi(f)$ will give a lower bound for the sums over zeroes. 

Following the proof of the conditional convergence of $J_\chi^T$ in \cite[\S7]{A}, we see that for any positive-definite test function $f_0*f^*_0$ with $ f_0 \in C_c^\infty(G(\A)^1)$, the resulting double integral is nonnegative, and the integrals can be expressed as an increasing limit of nonnegative functions. The integral converges, and $J_\chi^T(f)$ is positive-definite. Using the absolute convergence of the spectral side, we can interchange the sum and integrals to obtain the inner product expression as above, which is positive-definite. Then restricting the positive-definite, self-adjoint operator $\Omega^T_{P,\chi}(\lambda)$ to a $\pi$-isotypic subspace, the required result follows.
\end{proof}

As mentioned in the introduction, the proof of Weil's criterion for the Riemann hypothesis requires a careful choice of test function, which appears difficult to construct explicitly in the higher rank case. On the other hand, as discussed in \cite[\S5.2]{W}, we continue to carry the parameter $T$ as it allows for a degree of flexibility that may be useful for applications.

\subsection{Base change relations} 

We give a second application which gives a functorial relation between the zeroes of automorphic $L$-functions, using the comparing trace formulas. Let $E$ be a cyclic extension of prime degree $l$ over $\Q$. We consider only the case of $\GL_2$, in which case we have Hecke $L$-functions in place of Rankin--Selberg $L$-functions. The result relies on Langlands' proof of base change for $\GL_2$, which involves explicit identities of the intertwining operators. The scalar factors are described in the comparison of terms (10.3) and (10.28) in \cite[p.200]{BC}. Let $D_E^0$ be the set unitary characters 
\[
(\mu_E|\cdot|^\frac{{z}}{2},\nu_E|\cdot|^{-\frac{{z}}{2}})
\]
of $M(E) \bs M({\A_E})$ with a fixed central character, and similar $D^0=D^0_\Q$ with measure $|d{z}|$. Let $f$ be a square integrable function on $G(E)\bs G(\A_E)$ with the same central character, and similarly $f'$ on $G(\Q)\bs G(\A_\Q)$, related by the base change homomorphism of Hecke algebras \cite[\S8]{BC},
\[
f \to f' 
\]
(we note that in \cite{BC} these are denoted as $\phi$ and $f$ respectively). We have the operator obtained from the induced representation $\rho(f,\eta)$, and if $\sigma$ is a generator of the Galois group Gal$(E/\Q)$, we have $\rho(\sigma,\eta)$. Also recall that the normalizing factor $m(\eta)$ is equal to the quotient $L(s,\nu\mu^{-1})/L(s,\mu\nu^{-1}),$ and similarly $m_E$ where the $L$-functions are defined over $E$. Let us denote $\chi = \nu\mu^{-1}$ and $\chi_E = \nu_E\mu_E^{-1}$.  Our proof of Corollary \ref{bc} then follows from Langlands' comparison of trace formulae in this context.

\begin{proof}[Proof of Corollary \ref{bc}]

Referring to \cite[\S10]{BC} for further notation, we have the relation 
\begin{align*}
&\sum_{\eta\mapsto \eta_E} \frac{1}{4\pi}\int_{D^0}m^{-1}(\eta)m'(\eta)\tr(\rho(f',\eta))|d{z}|\\
&= \frac{1}{4\pi}\int_{D^0_E}m^{-1}_E(\eta)m'_E(\eta)\tr(\rho(f,\eta)\rho(\sigma,\eta))|d{z}|,
\end{align*}
with at most $l$ terms in the sum over $\eta$. There is a surjection of $D^0$ onto $D^0_E$ induced by the norm. Thus if $\eta$ maps to $\eta_E$, we have the relation
\[
\tr(\rho(f',\eta)) = \tr(\rho(f,\eta_E)\rho(\sigma,\eta_E))
\]
and by \cite[Lemma 3.4]{W} we may view it as the Mellin transform of a compactly supported function on $\R^\times_+$. Then the integral over $D_0$ can be expressed as
\[
\sum_{\eta\mapsto \eta_E}\frac{1}{4\pi}\int_{D^0_E}  m^{-1}(\eta)m'(\eta)\tr(\rho(f',\eta_E)\rho(\sigma,\eta_E))|d{z}|
\]
and rather than cancelling terms as Langlands does, we apply the analogue of Theorem \ref{ML} (see also \cite[Theorem 1.1]{W}) to conclude. 
\end{proof}

To work in higher rank, we note that following the remark preceding \cite[Theorem 4.2]{AC}, using the results of {Mœglin} and Waldspurger \cite{MW} on the discrete spectrum of $\GL_n$, one has the extension of the weak base change lift of Arthur and Clozel to all automorphic forms appearing in the decomposition of $L^2(G(\Q)\bs G(\A))$, in particular the distributions arising from the continuous spectrum. Unfortunately, the comparisons of trace formulae in higher rank do not provide formulas as explicit as those given by Langlands for $\GL_2$. 

\subsubsection*{Acknowledgments} The author thanks Farrell Brumley for encouragement concerning this work, and for pointing out the reference \cite{CL}. {The author was partially supported by NSF grant DMS-2212924.}

\bibliographystyle{alpha}
\bibliography{master}
\end{document}